\documentclass[11pt]{article}

\usepackage{amsmath}
\usepackage{amssymb}
\usepackage{amsthm}
\usepackage{tikz}
\usepackage{mathtools}
\usepackage{arydshln}

\usepackage{multirow}
\usepackage{enumitem}

\newtheorem{proposition}{Proposition}[section]

\newtheorem{corollary}{Corollary}[section]
\theoremstyle{definition}

\newtheorem{example}{Example}[section]

\theoremstyle{plain}

\newcommand{\E}{\mathmakebox[\widthof{3}]{\underset{\triangle}{3}}}

\newcommand{\nxt}{\operatorname{next}}
\newcommand{\mex}{\operatorname{mex}}

\begin{document}

\title{Infinite Subtraction Games with Periodic Outcomes and Aperiodic SG Values}

\author{Kai Liang\thanks{School of Mathematics and Statistics, Xidian University, 266 Xinglong Section, Xifeng Road, 710126 Xi'an, Shaanxi, China \\ 24071213197@stu.xidian.edu.cn}}
\date{\today}
\maketitle

\noindent\textbf{Revised Note (August 2026):}
A previous version of this preprint, posted on 18 Aug 2026, claimed to solve an open problem as the title states.
The author have since become aware that this problem, along with numerous related open questions in subtraction games and similar games, has been extensively studied and resolved in the work of Lomi{\v{c}}~\cite{Lomic2019}, which was inadvertently overlooked during our initial literature review.
Consequently, our contribution does not constitute a novel solution, but rather provides an additional example illustrating the framework established by Lomi{\v{c}}.
In light of this, we have decided that this preprint will not be submitted for journal publication.
We encourage readers to consult Lomi{\v{c}}'s paper for a comprehensive treatment of the subject.

\begin{abstract}
     We find some significant special cases of subtraction games with infinite subtraction sets, whose outcome sequences are periodic, but whose SG value sequences are bounded and aperiodic.
\end{abstract}

\section{Definitions and Notation}

\textit{Subtraction games}~\cite{SubGame1966} are among the simplest impartial games in combinatorial game theory.
The rules are as follows: given a nonempty set $S$ of positive integers (called the \textit{subtraction set}, typically assumed to be finite) and an initial position consisting of a heap of $u$ tokens (or beans), two players alternately remove $s \in S$ tokens from the heap.
The player who is first unable to move loses, and the other player wins.

In this paper, we use the natural number $u \in \mathbb{N}$ to denote the position with $u$ tokens in a subtraction game.
We shall also write $\mathcal{H} \coloneq\mathbb{N}$ to denote the set of all positions, in order to distinguish it from natural numbers in the usual sense.

For any integer set $N\subseteq\mathbb{Z}$, and $m\in\mathbb{Z}$, we will also use the notation like
\begin{align*}
    N^{>m}     &\coloneq \{n \mid n\in N,\ n> m\}; \\
    N^{\geq m} &\coloneq \{n \mid n\in N,\ n\geq m\}; \\
    N\pm m     &\coloneq \{n\pm m \mid n\in N\}; \\
    mN         &\coloneq \{mn \mid n\in N\}.
\end{align*}

Let $\nxt(u)$ denote the set of all successors of position $u$.
For subtraction games, using our notation, it can be simply given by
\[
    \nxt(u) \coloneq (u-S)^{\geq 0}.
\]

We use $\mathcal{P}$ and $\mathcal{N}$ to denote the sets of positions that are previous-player wins and current-player wins, respectively.
Since every subtraction game starting from any position terminates after finitely many moves, the outcome of a position can be defined recursively by
\begin{align*}
    u \in \mathcal{P} &\iff \nexists u'\in \nxt(u),\ u'\in\mathcal{P}; \\
    u \in \mathcal{N} &\iff \exists  u'\in \nxt(u),\ u'\in\mathcal{P}.
\end{align*}
This definition already implies that all terminal positions are necessarily $\mathcal{P}$-positions.

We use $o(u)$ to denote the \textit{outcome} of position $u$, adopting the same notation as the sets above:
\[
    o(u) \coloneq
    \begin{cases}
        \mathcal{P}, & u\in\mathcal{P};\\
        \mathcal{N}, & u\in\mathcal{N}.
    \end{cases}
\]

We use $\mathcal{G}(u)$ to denote the \textit{SG value} (or \textit{nim value})~\cite{Sprague1935, Grundy1939} of position $u$, defined as
\[
    \mathcal{G}(u) \coloneq \mex(\{ \mathcal{G}(u') \mid u'\in\nxt(u)\}),
\]
where the $\mex$ function
\[
    \mex(N) \coloneq \min (\mathbb{N} \setminus N)
\]
denotes the smallest nonnegative integer not appearing in a given set $N$ of nonnegative integers.

By the Sprague--Grundy theorem,
\[
    o(u) = \mathcal{P} \iff \mathcal{G}(u) = 0.
\]

In this paper, periodicity is by default referred to as \textit{ultimately periodicity},
namely, for a sequence \(a(n), n\in\mathbb{N}\), there exist a preperiod \(p\in\mathbb{N}\) and a period \(t\in\mathbb{N}^{\geq 1}\) (both taken to be minimal) such that for all \(i \geq p\),
\[
    a(i+t)=a(i).
\]

\section{Problem Background}

In the standard subtraction game, the subtraction set $S$ is finite.
In this case, the sequences of outcomes and SG values can always be given by a recurrence of the form.
For a position $u\in\mathcal{H}$ with $u\geq \max(S)$, we have
\begin{align*}
    o(u) &\coloneq
    \begin{cases}
        \mathcal{N}, & \mathcal{P}\in \{o(u-s) \mid s\in S\}; \\
        \mathcal{P}, & \mathcal{P}\notin \{o(u-s) \mid s\in S\}.
    \end{cases} \\
    \mathcal{G}(u) &\coloneq \mex(\{ \mathcal{G}(u-s) \mid s\in S \}).
\end{align*}

From this, it is not difficult to see that both sequences are always ultimately periodic.
However, if the subtraction set is infinite (in which case we call the game an \textit{infinite subtraction game}), the situation becomes much more complicated.
One may then ask: does there exist a game whose outcome sequence is periodic but whose SG sequence is not?
This question appears as an open problem in~\cite{GONC6}:

``
    An old, voiced but not written, question asks for an infinite subtraction set where the associated outcome-sequence is periodic but the nim-sequence is aperiodic.
    These are easy to find in octal games.
''

In this paper, we construct some examples to demonstrate that such games do indeed exist.

\section{An Example}

We first present one of the simplest example among those we have found.
Its subtraction set is
\begin{example}\label{S_e1}
    \begin{align*}
        S &= \{5\} \cup \{3 \times 2^p - 2 \mid p\in\mathbb{N}\} \\
          &= \{5, 1, 4, 10, 22, 46, 94, 190,382, \ldots\}.
    \end{align*}
\end{example}

A simple program can compute the first 120 SG values as:
\begin{center}
    \begin{tabular}{|c|c|}
        \hline
        $u$ & $\mathcal{G}(u)$ \\
        \hline
         & \\[-8pt]
        0+  & $0~ 1~ 0~ 1~ 2~ 3~ 2~ 3~ 0~ 1~ 4~ 0~ 1~\E~ 0~ 1~\E~ 0~ 1~ 2~ 0~ 1~ 2~ 0~ 1~ 2~ 0~ 1~ 2~ 0~$ \\[8pt]
        30+ & $1~\E~ 0~ 1~ 2~ 0~ 1~ 2~ 0~ 1~ 2~ 0~ 1~\E~ 0~ 1~ 2~ 0~ 1~ 2~ 0~ 1~ 2~ 0~ 1~ 2~ 0~ 1~ 2~ 0~$ \\[8pt]
        60+ & $1~ 2~ 0~ 1~\E~ 0~ 1~ 2~ 0~ 1~ 2~ 0~ 1~ 2~ 0~ 1~ 2~ 0~ 1~ 2~ 0~ 1~ 2~ 0~ 1~ 2~ 0~ 1~ 2~ 0~$ \\[8pt]
        90+ & $1~ 2~ 0~ 1~\E~ 0~ 1~ 2~ 0~ 1~ 2~ 0~ 1~ 2~ 0~ 1~ 2~ 0~ 1~ 2~ 0~ 1~ 2~ 0~ 1~ 2~ 0~ 1~ 2~ 0~$ \\[8pt]
        $\ldots$ & $\ldots$ \\
        \hline
    \end{tabular}
\end{center}

It can be observed that the SG sequence is close to the periodic sequence
\[
    0~ 1~ 0~ 1~ 2~ 3~ 2~ 3~ 0~ 1~ 4~ (0~ 1~ 2)^{\omega}
\]
except that some of the 2's in the periodic part $(0~1~2)^{\omega}$ are replaced by 3's (marked as $\E$),
and the gaps between these 3's grow increasingly large, thereby destroying the periodicity.

Let $\mathcal{H}_g$ denote the set of positions with SG value $g$.
The set of positions with SG value $3$ in this game is
\[
    \mathcal{H}_3 = \{5, 7, 16, 28, 52, 100, 196, 388,\ldots\}
\]
and it is not hard to notice that precisely
\[
    \mathcal{H}_3 = \{5, 7\} \cup (S^{\geq10}+6)
\]

\section{Conclusion and Proof}

The following result gives a special class of infinite subtraction games, including the examples above, and provides a complete description of the SG values of all positions:

\begin{proposition}\label{main_prop}
    Let
    \[
        S^{>10} \subseteq (3\mathbb{N}+1)^{>10}
    \]
    be arbitrary.
    Then for the subtraction game with infinite subtraction set
    \[
        S=\{1, 4, 5, 10\} \cup S^{>10},
    \]
    the sets of positions with each SG value are
    \begin{align*}
        \mathcal{H}_0 &= \{0, 2, 8\} \cup \mathcal{H}_0{}^{>10}; \\
        \mathcal{H}_1 &= \{1, 3, 9\} \cup \mathcal{H}_1{}^{>10}; \\
        \mathcal{H}_2 &= \{4, 6\} \cup \mathcal{H}_2{}^{>10}; \\
        \mathcal{H}_3 &= \{5, 7\} \cup \mathcal{H}_3{}^{>10}; \\
        \mathcal{H}_4 &= \{10\}; \\
        \mathcal{H}_g &= \varnothing, \quad \text{for } g\geq 5,
    \end{align*}
    where
    \begin{align*}
        \mathcal{H}_0{}^{>10} &= (3\mathbb{N}+2)^{>10} = \{11, 14, 17, 20,\ldots\}; \\
        \mathcal{H}_1{}^{>10} &= (3\mathbb{N})^{>10} = \{12, 15, 18, 21,\ldots\}; \\
        \mathcal{H}_2{}^{>10} &= (3\mathbb{N}+1)^{>10} \setminus (S^{\geq 10}+6); \\
        \mathcal{H}_3{}^{>10} &= (S^{\geq 10}+6).
    \end{align*}
\end{proposition}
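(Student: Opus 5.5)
Define the claimed value function $g\colon\mathcal{H}\to\mathbb{N}$ by the right-hand side of Proposition~\ref{main_prop}: $g(u)$ is given by the explicit table for $u\le 10$, and for $u\ge 11$ we set $g(u)=0$ if $u\equiv 2 \pmod 3$, $g(u)=1$ if $u\equiv 0$, $g(u)=3$ if $u\in S^{\geq 10}+6$, and $g(u)=2$ otherwise. The sets in the statement are well defined because $S^{\geq10}\subseteq 3\mathbb{N}+1$ (note $10\equiv1$), so $S^{\ge10}+6\subseteq (3\mathbb{N}+1)^{>10}$. We prove $\mathcal{G}(u)=g(u)$ by strong induction on $u$, and at each step verify $g(u)=\mex\{g(u-s)\mid s\in S,\ s\le u\}$. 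For $0\le u\le 10$ only $s\in\{1,4,5,10\}$ can be used, and the check is a finite computation against the table ($0,1,0,1,2,3,2,3,0,1,4$).

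For $u\ge 11$ we argue by residue classes, using that $s\equiv 1\pmod 3$ for every $s\in S\setminus\{5\}$ and $5\equiv 2$.

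\emph{Case $u\equiv 2$.} Every move $s\equiv1$ lands on a residue $1$ position: either $\ge 11$ with value $2$ or $3$, or $\le 10$ with value in $\{2,3,4\}$ (positions $4,7,10$). The move $s=5$ lands on residue $0$, which for $u-5\ge 6$ is either $\ge 12$ with value $1$, or equals $6$ or $9$ with values $2,1$. Hence $0$ is never an option and $g(u)=0$.

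\emph{Case $u\equiv 0$.} Here $s\equiv1$ reaches residue $2$ positions, which all have value $0$ (the positions $2,5,8$ have values $0,3,0$, and $\ge 11$ gives $0$). So $0$ is reachable, e.g.\ via $s=1$. The move $s=5$ reaches residue $1$, where the values are $2$, $3$, or (for small positions) $2,3,4$. Value $1$ is unreachable because no residue $2$ position has value $1$. Thus $g(u)=1$.

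\emph{Case $u\equiv 1$.} Moves $s\equiv1$ reach residue $0$, which has values $1$, or values in $\{0,1,2\}$ for the small positions $0,3,6,9$. The move $s=5$ reaches residue $2$, with value $0$ (if $u-5=5$ then $u=10$, which is excluded). So $0$ and $1$ are always options, via $s=5$ and $s=1$ respectively, and $4$ is never an option. The crux is deciding whether $2$ is an option. Among small positions only $u-s=6$ has value $2$ (position $0$ has value $0$, position $3$ has value $1$, position $9$ has value $1$), and for $u\ge11$ no larger residue $0$ position has value $2$. Position $6$ is reachable exactly when $u-6\in S$, which for $u\ge 11$ means $u-6\in S^{\ge 5}\cap(3\mathbb{N}+1)=S^{\ge10}$. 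Also $3$ is never an option: residue $0$ values are $\le 2$, and residue $2$ values are $0$ except position $5$, which would need $s=u-5\equiv 2$, i.e.\ $s=5$ and $u=10$. Therefore $g(u)=3$ iff $u\in S^{\ge10}+6$, and $g(u)=2$ otherwise.

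Finally, no value $\ge5$ ever arises, and value $4$ occurs only at $u=10$. The main obstacle is the residue-$1$ bookkeeping: one must carefully enumerate which small positions ($\le 10$) carry exceptional values and confirm that only position $6$ can supply the value $2$. This is what produces the dependence on $S^{\ge10}+6$.
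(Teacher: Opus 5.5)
Your proof is correct and is essentially the paper's argument: both verify the mex recursion for the claimed values using that every $s\in S$ is nonzero modulo $3$ (all $\equiv 1$ except $s=5$), and then handle the few exceptional positions $\le 10$, with position $6$ as the sole source of the value $2$ producing the dependence on $S^{\ge10}+6$; the paper organizes this as ``each smaller value is reachable'' plus ``no two positions of the same $\mathcal{H}_g$ differ by an element of $S$'', whereas you organize by the residue of $u$. One harmless slip: in the case $u\equiv 2$ your list of reachable residue-$1$ positions omits position $1$ (value $1$, reachable when $u-1\in S^{\ge10}$), which does not affect the conclusion since only the absence of the value $0$ matters there.
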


\begin{proof}
    By definition, to prove that a position $u\in\mathbb{N}$ has SG value $g$, it suffices to show:
    (1) for every smaller SG value $g'<g$, $u$ has at least one successor $u'$ with SG value $g'$; and
    (2) $u$ has no successor $u'$ whose SG value is still $g$.

    The cases $u\leq 10$ can be verified by direct computation, so it remains to prove the cases $g<4$ and $u>10$.

    (1) It suffices to indicate how to choose the corresponding $s$ for each $g'<g<4$.

    If $g'=0$, take
    \[
        s=
        \begin{cases}
            1, & g=1; \\
            5, & g=2,3,
        \end{cases}
    \]
    which ensures that $u-s\in \mathcal{H}_0$.
    If $g'=1$, for $g=2,3$ we simply take $s=1$.

    For the remaining case $g'=2, g=3$, we have $u\in S^{\geq 10}+6$, so taking $u'=6\in\mathcal{H}_2$ gives $s=u-u'\in S$.

    (2) That is, for every $g<4$ there are no $u,u'\in\mathcal{H}_g$ such that $u-u'\in S$.

    For $u\in\mathcal{H}_g{}^{>10}$, note that for $s\in S$,
    \[
        s\equiv
        \begin{cases}
            2, & s=5; \\
            1, & s\neq 5
        \end{cases}
        \pmod 3,
    \]
    which is never $0$; but the elements of each $\mathcal{H}_g{}^{>10}$ for $g<4$ are all congruent to a fixed residue modulo $3$, namely
    \[
        g \equiv m \coloneqq
        \begin{cases}
            2, & g=0; \\
            0, & g=1; \\
            1, & g=2,3
        \end{cases}
        \pmod 3.
    \]

    Thus $u'$ can only be one of the elements $u\in\mathcal{H}_g{}^{\leq 10}$ whose residue modulo $3$ is not $m$.
    For each $g<4$, this leaves exactly one possibility:
    \[
        u'=
        \begin{cases}
            0, & g=0; \\
            1, & g=1; \\
            6, & g=2; \\
            5, & g=3.
        \end{cases}
    \]
    Then
    \[
        s\equiv u-u' \equiv
        \begin{cases}
            2, & g=0,1,3;\\
            1, & g=2
        \end{cases}
        \pmod 3.
    \]

    Hence for $g=0,1,3$, we must have $s=5$, and the corresponding values $u=u'+s$ are $5,6,10$, none of which lie in the corresponding $\mathcal{H}_g$.

    For $g=2$, we must have $s\neq 5$; then $u=s+6$. 
    The cases $s\geq 10$ are precisely excluded by the expression for $\mathcal{H}_2$, while the remaining cases $s=1,4$ give $u=7,10$, which are not in $\mathcal{H}_2$ either.
\end{proof}

\begin{corollary}
    (1) For every $k\in\mathbb{N}^{\geq 1}$, there exist infinitely many infinite subtraction games whose outcome sequence and nim‐sequence are both periodic, and such that the period of the latter is $k$ times that of the former.

    (2) There exist infinitely many infinite subtraction games whose outcome sequence is periodic but whose nim‐sequence is not periodic.
\end{corollary}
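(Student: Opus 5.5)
The plan is to read both parts directly off Proposition~\ref{main_prop}: the outcome sequence is the same for every admissible choice of $S^{>10}$, and the nim-sequence is controlled entirely by $S^{>10}$.

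\textbf{Outcome sequence.} By Proposition~\ref{main_prop},
\[
\mathcal{P}=\mathcal{H}_0=\{0,2,8\}\cup(3\mathbb{N}+2)^{>10},
\]
whatever $S^{>10}\subseteq(3\mathbb{N}+1)^{>10}$ is. Hence the outcome sequence is ultimately periodic with period $3$, and its preperiod does not depend on $S^{>10}$. So every game in the family has periodic outcomes, and it only remains to arrange the nim-sequence.

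\textbf{Nim-sequence.} For $u>10$ the proposition gives the following values:
\begin{itemize}[label={}, leftmargin=1em]
\item $\mathcal{G}(u)=0$ if $u\equiv 2$, and $\mathcal{G}(u)=1$ if $u\equiv 0 \pmod 3$;
\item for $u\equiv 1\pmod 3$, $\mathcal{G}(u)=3$ if $u-6\in S^{\geq 10}$, and $\mathcal{G}(u)=2$ otherwise.
\end{itemize}
So after position $10$, the nim-sequence is determined by the set $T\coloneq S^{\geq10}+6$ of positions carrying value $3$. Any period $t$ of the nim-sequence must preserve the $0/1$ pattern, so $3\mid t$. It must also map $T$ to itself on a tail. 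Conversely, if $T$ is ultimately invariant under shifting by some $t$ with $3\mid t$, then $t$ is a period. Therefore the nim-sequence is ultimately periodic if and only if the indicator of $S^{>10}$ is, and its minimal period is $\operatorname{lcm}(3,\text{minimal period of } \mathbf 1_{T})$.

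\textbf{Part (1).} Fix $k\geq1$ and a residue $r\equiv1\pmod 3$, and take
\[
S^{>10}=\{n>10 : n\equiv r \pmod{3k}\}.
\]
Then $T$ is, on a tail, exactly the residue class $r+6 \bmod 3k$. Its indicator has minimal period $3k$, because a single residue class mod $3k$ is invariant under no smaller shift. So the nim-sequence has period $3k$, which is $k$ times the outcome period $3$. For $k=1$ this is just $S^{>10}=(3\mathbb{N}+1)^{>10}$. To get infinitely many games, add or remove finitely many elements of $(3\mathbb{N}+1)^{>10}$. This yields infinitely many distinct subtraction sets, all still covered by the proposition, and it changes only the preperiods.

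\textbf{Part (2).} Take any infinite $S^{>10}\subseteq(3\mathbb{N}+1)^{>10}$ whose consecutive gaps are unbounded, such as $\{3\cdot2^p-2: p\geq2\}$ from Example~\ref{S_e1}. There are infinitely many such sets, for example obtained by adding finitely many extra elements. Then $T$ is infinite, so value $3$ occurs infinitely often. If the nim-sequence were ultimately periodic with period $t$, the gaps between consecutive $3$'s would eventually be at most $t$. This contradicts the unbounded gaps, so the nim-sequence is aperiodic, while the outcome sequence is periodic by the first step.

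The only step needing real care is the minimal-period claim in part (1): showing that no proper divisor of $3k$ (with $3\mid t$) is a period. That follows from $T$ being a single residue class mod $3k$ on a tail. The remaining steps are immediate consequences of Proposition~\ref{main_prop}.
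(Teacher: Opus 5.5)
Your proposal is correct and follows essentially the same route as the paper. You use the outcome period $3$, which is independent of $S^{>10}$ by Proposition~\ref{main_prop}. For part (1) you take a tail of a single residue class modulo $3k$, with finite perturbations giving infinitely many games; this matches the paper's varying cutoff $p$. For part (2) you take a non-periodic tail of $S$. Your $\operatorname{lcm}$ argument and unbounded-gap argument make explicit the minimal-period and aperiodicity claims that the paper leaves implicit.

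One cosmetic slip: $\{3\cdot 2^p-2 : p\geq 2\}$ contains $10$, which is not in $(3\mathbb{N}+1)^{>10}$, so you should start at $p\geq 3$. The resulting $S$ is unchanged, since $10$ is already in $\{1,4,5,10\}$.
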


\begin{proof}
    The outcome sequence of the games constructed in Proposition~\ref{main_prop} always has period $3$; we only need to choose $S$ so that the nim‐sequence has the desired periodicity.

    (1) Take any $p\in(3\mathbb{N}+1)^{>10}$, and define $S$ by
    \begin{align*}
        S^{\leq p} &= \{1,4,5,10\}; \\
        S^{> p}    &= (3k\mathbb{N}+1)^{> p}.
    \end{align*}
    Then by Proposition~\ref{main_prop} the period is $3k$; moreover, infinitely many choices of $p$ yield infinitely many distinct games.

    (2) Besides the Example~\ref{S_e1}, it suffices to take any subtraction set $S$ whose indicator sequence is genuinely not ultimately periodic.
    Clearly there are infinitely many (indeed uncountably many) such choices of $S$.
\end{proof}

\bibliographystyle{plain}
\bibliography{references}

\end{document}